\documentclass[12pt,a4paper]{article}
\usepackage{theorem}
\setlength{\textheight}{42\baselineskip}
\setlength{\textwidth}{400pt} \setlength{\oddsidemargin}{20pt}
\setlength{\evensidemargin}{20pt} \setlength{\topmargin}{0cm}
\parindent=0cm
\parskip=5pt
\newtheorem{lemma}{Lemma}

\newtheorem{theorem}[lemma]{Theorem}
\newtheorem{corollary}[lemma]{Corollary}
\newtheorem{example}[lemma]{Example}


\newcommand{\Z}{{\bf Z}}
\newcommand{\R}{{\bf R}}
\newcommand{\C}{{\bf C}}

\newcommand{\rme}{{\rm e}}
\newcommand{\rmd}{{\rm d}}

\newcommand{\cE}{{\cal E}}

\newcommand{\cG}{{\cal G}}

\newcommand{\cL}{{\cal L}}
\newcommand{\cM}{{\cal M}}

\newcommand{\cS}{{\cal S}}
\newcommand{\cT}{{\cal T}}

\newcommand{\sig}{\sigma}

\newcommand{\bet}{\beta}
\newcommand{\gam}{\gamma}

\newcommand{\lam}{\lambda}
\newcommand{\del}{\delta}
\newcommand{\eps}{\varepsilon}


\newcommand{\tr}{{\rm tr}}

\newcommand{\Spec}{{\rm Spec}}

\newcommand{\norm}{\Vert}
\renewcommand{\Re}{{\rm Re}}
\renewcommand{\Im}{{\rm Im}}




\newenvironment{eq}{\begin{equation}}{\end{equation}}
\newcommand{\block}{\hfill\rule{2.5mm}{2.5mm}}
\newenvironment{proof}{\textbf{Proof}}{\block}
\usepackage{hyperref}
\usepackage{graphicx}
\title{Embeddable Markov Matrices}
\author{E B Davies}
\date{11 January 2010}
\begin{document}
\maketitle
\begin{abstract}
We give an account of some results, both old and new, about any $n\times n$ Markov matrix that is embeddable in a one-parameter Markov semigroup. These include the fact that its eigenvalues must lie in a certain region in the unit ball. We prove that a well-known procedure for approximating a non-embeddable Markov matrix by an embeddable one is optimal in a certain sense.
\end{abstract}

SHORT TITLE: Embeddable Markov Matrices\\
MSC2000 classification: 60J27, 60J22, 60J10, 65C40

\section{Introduction}\label{intro}

A Markov matrix $A$ is defined to be a real $n\times n$ matrix with
non-negative entries satisfying $\sum_{j=1}^n A_{i,j}=1$ for all
$i$. The spectral properties of non-negative matrices and linear
operators and in particular of Markov matrices have been studied in
great detail, because of their great importance in finance,
population dynamics, medical statistics, sociology and many other
areas of probability and statistics. Theoretical accounts of parts
of the subject may be found in  \cite{chung,LOTS,minc,nagel}. This
paper develops ideas of \cite{higham}, which investigated when the
$p$th roots of Markov matrices were also Markov; this problem is
related to the possibility of passing from statistics gathered at
certain time intervals, for example every year, to the corresponding
data for shorter time intervals.

Given an empirical Markov matrix, three major issues discussed in
\cite{singer} are embeddability, uniqueness of the embedding and the
effects of data/sampling error. All of these are also considered
here. We call a Markov matrix $A$ embeddable if there exists a
matrix $B$ such that $A=\rme^B$ and $\rme^{Bt}$ is Markov for all
$t\geq 0$. The matrix $B$ involved need not be unique, but it must
have non-negative off-diagonal entries and all its row sums must
vanish; see \cite{chung} or \cite[section 12.3]{LOTS}. In
probabilistic terms a Markov matrix  $A$ is embeddable if it is
obtained by taking a snapshot at a particular time of an autonomous
finite state Markov process that develops continuously in time. On
the other hand a Markov matrix might not be embeddable if it
describes the annual changes in a population that has a strongly
seasonal breeding pattern; in such cases one might construct a more
elaborate model that incorporates the seasonal variations.
Embeddability may also fail because the matrix entries are not
accurate; in such cases a regularization technique might yield a
very similar Markov matrix that is embeddable; see \cite{KS} for
examples arising in finance.

Theorem~\ref{maintheorem} describes some spectral consequences of embeddability. The earliest analysis of the structure of the set $\cE$ of embeddable $n\times n$ Markov matrices and its topological boundary in the set of all Markov matrices was given by Kingman \cite{kingman}, who concluded that except in the case $n=2$ it seemed unlikely that any very explicit characterisation of $\cE$ could be given; see \cite{johan} for further work on this problem. Theorem~\ref{approxtheorem} proves that a well-known method of approximating a Markov matrix by an embeddable Markov matrix is optimal in a certain sense. Many of the results in the present paper appear in one form or other in papers devoted to the wide variety of applications, and it is hoped that collecting them in one place may be of value.

\section{The main theorem}

For the sake of definiteness we define the principal logarithm of a
number $z\in\C\backslash (-\infty,0]$ to be the branch of the
logarithm with values in $\{ w:|\Im(w)|<\pi\}$. We define the
principal logarithm of an $n\times n$ matrix $A$ such that
$\Spec(A)\cap (-\infty,0]=\emptyset$ to be that defined by the
functional integral
\begin{eq}
\log(A)=\frac{1}{2\pi i}\int_\gam \frac{\log(z)}{zI-A}\, \rmd
z\label{logcontour}
\end{eq}%
using the principal logarithm of $z$ and a simple closed contour
$\gam$ in $\C\backslash (-\infty,0]$ that encloses the spectrum of
$A$. This formula goes back to Giorgi in 1926; see
\cite[Theorem~VII.1.10 and notes, p.607]{DS}. If $A=TDT^{-1}$ where
$D$ is diagonal, this is equivalent to $\log(A)=T\log(D)T^{-1}$
where $\log(D)$ is obtained from $D$ by applying $\log$ to each
diagonal entry of $D$. The non-diagonalisable case is discussed in
some detail in \cite{singer} and yields the same matrix as
(\ref{logcontour}).

\begin{lemma}\label{Markovlog}
If $A$ is a Markov matrix and $\Spec(A)\cap (-\infty,0]=\emptyset$
then the principal logarithm $L=\log(A)$ lies in the set $\cL$ of
all real $n\times n$ matrices $L$ such that $\sum_{1\leq j\leq n}
L_{i,j}=0$ for every $i$.
\end{lemma}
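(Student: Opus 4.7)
The statement has two distinct assertions: that $L$ is real, and that its row sums vanish. I would handle the row-sum property first, since it is the substantive content and uses the Markov hypothesis directly. Let $\mathbf{1}$ denote the column vector with every entry equal to $1$. Because $A$ is a Markov matrix, $A\mathbf{1}=\mathbf{1}$, so for every $z$ on the contour $\gam$ one has $(zI-A)\mathbf{1}=(z-1)\mathbf{1}$, and hence $(zI-A)^{-1}\mathbf{1}=(z-1)^{-1}\mathbf{1}$. Applying both sides of (\ref{logcontour}) to $\mathbf{1}$ moves $\mathbf{1}$ through the integral and yields
\begin{eq}
L\mathbf{1}\;=\;\left(\frac{1}{2\pi i}\int_\gam \frac{\log(z)}{z-1}\,\rmd z\right)\mathbf{1}.
\end{eq}
The scalar integrand $\log(z)/(z-1)$ has a removable singularity at $z=1$ (since $\log(z)=(z-1)-\half(z-1)^2+\cdots$) and is otherwise analytic on $\C\backslash(-\infty,0]$, a domain which is simply connected and contains the closed contour $\gam$. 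Cauchy's theorem therefore forces the integral to vanish, giving $L\mathbf{1}=0$, i.e.\ $\sum_j L_{i,j}=0$ for every $i$.

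For the reality of $L$, I would exploit the fact that $A$ is real, so its spectrum is symmetric under complex conjugation. One may therefore choose the contour $\gam$ to be invariant under $z\mapsto\bar z$. Taking the complex conjugate of (\ref{logcontour}) and using $\overline{A}=A$, $\overline{\log(z)}=\log(\bar z)$ on $\C\backslash(-\infty,0]$, and the orientation-reversing effect of conjugation on $\gam$, a direct change of variables $w=\bar z$ returns the original integral. Hence $\overline{L}=L$, so $L$ is real.

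I do not foresee a serious obstacle: the non-trivial ingredient is just to notice that $\mathbf{1}$ is a common eigenvector of $A$ and of every resolvent $(zI-A)^{-1}$, which reduces the matrix identity to a scalar contour integral. The only mildly delicate point is the analyticity of $\log(z)/(z-1)$ at the eigenvalue $1$, which must lie inside $\gam$; this is handled by the removable singularity observation above.
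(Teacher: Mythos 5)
Your proof is correct and follows essentially the same route as the paper: the conjugation-symmetric contour together with $\overline{(zI-A)^{-1}}=(\bar zI-A)^{-1}$ gives reality, and the identity $(zI-A)^{-1}\mathbf{1}=(z-1)^{-1}\mathbf{1}$ reduces the row-sum claim to a scalar contour integral. The only difference is that you spell out the final step, namely that $\int_\gam \log(z)/(z-1)\,\rmd z=0$ by Cauchy's theorem via the removable singularity at $z=1$, which the paper leaves implicit.
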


\begin{proof}
We use the formula (\ref{logcontour}) and take the contour $\gam$ to
be symmetrical with respect to reflection about the $x$-axis. The
statements of the lemma follow directly from two properties of the
resolvent matrices.

The first is the identity
\begin{eq}%
((\overline{z}I-A)^{-1})_{i,j}=\overline{(zI-A)^{-1})_{i,j}}\label{resolventconj}
\end{eq}%
This holds for large $|z|$ by virtue of the identity
\begin{eq}%
(zI-A)^{-1}=z^{-1}\sum_{n=0}^\infty (A/z)^n\label{resolventseries}
\end{eq}%
and (\ref{resolventconj}) then extends to all $z\notin \Spec(A)$ by
analytic continuation.

The second identity needed is
\[
(zI-A)^{-1}1=(z-1)^{-1}1,
\]
whose proof follows the same route, using (\ref{resolventseries})
and analytic continuation.
\end{proof}

The results in our next lemma are all well known and are included
for completeness.

\begin{lemma}
If $A$ is embeddable then $0$ is not an eigenvalue of $A$ and every
negative eigenvalue has even algebraic multiplicity. Moreover
$\det(A)>0$. If $A$ is embeddable and $A_{i,j}>0$, $A_{j,k}>0$ then
$A_{i,k}>0$.
\end{lemma}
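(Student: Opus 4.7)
The plan is to leverage the representation $A = \rme^B$, where $B$ is a real matrix with non-negative off-diagonal entries and zero row sums. The first and third assertions fall out immediately from elementary properties of the exponential: since $\rme^B$ has inverse $\rme^{-B}$, the value $0$ cannot belong to $\Spec(A)$; and $\det(A) = \det(\rme^B) = \rme^{\tr(B)}$ is automatically positive because $\tr(B)$ is real.

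For the even multiplicity of the negative eigenvalues, my approach is to invoke the spectral mapping theorem with algebraic multiplicities: for any $\mu \in \Spec(A)$, the multiplicity of $\mu$ in $A$ equals the sum of the $B$-multiplicities of all $\lambda \in \Spec(B)$ satisfying $\rme^\lambda = \mu$. When $\mu < 0$, every such $\lambda$ must have imaginary part $(2k+1)\pi$ for some integer $k$ and is therefore non-real; since $B$ is real, its non-real spectrum groups into complex-conjugate pairs $\{\lambda,\overline{\lambda}\}$ of equal $B$-multiplicity, and $\rme^\lambda = \mu$ holds for $\lambda$ iff it holds for $\overline{\lambda}$. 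Hence the multiplicity of $\mu$ in $A$ is a sum of even contributions, and so is itself even.

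The main obstacle is the final statement. The plan is to first establish a path-theoretic description of positivity: for any $t > 0$, $(\rme^{tB})_{i,k} > 0$ if and only if there is a walk $i = p_0, p_1, \ldots, p_m = k$ in which each step satisfies either $p_{\ell-1} = p_\ell$ or $B_{p_{\ell-1},p_\ell} > 0$. To prove this, shift to $M = B + cI$ with $c$ chosen large enough that $M$ has strictly positive diagonal and non-negative off-diagonal entries; then $\rme^{tB} = \rme^{-ct}\sum_{n \geq 0}(tM)^n/n!$ is a non-negative combination of powers of $M$, and $(M^n)_{i,k}$ expands as a sum over length-$n$ walks of products of non-negative entries of $M$. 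Strict positivity is therefore equivalent to the existence of some walk through positive entries of $M$; the positive diagonal of $M$ makes self-loops free, so reachability depends only on the graph of strictly positive off-diagonal entries of $B$ and is independent of $t$. Given this description, the transitivity conclusion is immediate: concatenating a walk from $i$ to $j$ (guaranteed by $A_{i,j} > 0$) with a walk from $j$ to $k$ (guaranteed by $A_{j,k} > 0$) produces a walk from $i$ to $k$, so $A_{i,k} > 0$.
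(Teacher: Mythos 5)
Your proof is correct and follows essentially the paper's route: invertibility of $\rme^B$ and $\det(A)=\rme^{\tr(B)}$ for the first and third claims, pairing of non-real complex-conjugate eigenvalues of the real matrix $B$ for the evenness claim (the paper phrases this with spectral projections, you with the spectral mapping theorem counting algebraic multiplicities, which amounts to the same thing), and the diagonal shift $M=B+cI$ with the non-negative power series for the positivity statement, exactly as the paper writes $B=C-\delta I$. The only variation is the finish of the last claim: the paper deduces that the zero pattern of $\rme^{Bt}$ is independent of $t>0$ and then uses the semigroup factorization $A=\rme^{B/2}\rme^{B/2}$, whereas you concatenate walks in the graph of positive entries of $M$, an equally valid way to conclude.
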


\begin{proof}
The first statement follows from the fact that
\[
\Spec(A)=\exp(\Spec(B)).
\]

Given an eigenvalue $\lam<0$ of $A$ let
\begin{eqnarray*}
S_+&=&\{ z\in\Spec(B): \rme^z=\lam \mbox{ and } \Im(z)>0\},\\
S_-&=&\{ z\in\Spec(B): \rme^z=\lam \mbox{ and } \Im(z)<0\},
\end{eqnarray*}
and let $\cL_\pm$ be the spectral projections of $B$ associated with
$S_\pm$. Since $\rme^z=\lam$ implies that $\Im(z)\not= 0$, we can
deduce that $\cL_-\cap\cL_+ =0$ and that $\cM=\cL_-+\cL_+$ is the
spectral projection of $A$ associated with the eigenvalue $\lam$.
Since $B$ is real $\cL_-$ may be obtained from $\cL_+$ by complex
conjugation, so
\[
\dim{\cM}=\dim(\cL_+)+\dim(\cL_-)=2\dim(\cL_+).
\]
See \cite{elfving}.

By combining the reality of $B$ with the formula
$\,\det(A)=\rme^{\tr(B)}$ we obtain $\det(A)>0$. See \cite{kingman}.

The last statement follows from the general theory of Markov chains
and is due to Ornstein and Levy, independently; see \cite[Section
2.5, Theorem 2]{chung} and \cite[Theorem~13.2.4]{LOTS}. We first
note that one may write $B=C-\del I$ where all the entries of $C$
are non-negative and $\del \geq 0$. Hence
\[
\rme^{Bt}=\rme^{-\del t}\sum_{n=0}^\infty C^n t^n/n!
\]
where each entry of each $C^n$ is non-negative. This implies that if
$(\rme^{Bt})_{i,j}>0$ for some $t>0$ the same holds for all $t>0$.
This quickly yields the final statement.
\end{proof}

Kingman \cite{kingman} has shown that the set $\cE$ of embeddable
Markov matrices is a closed subset of the set of all $n\times n$
Markov matrices. The matrix norm used throughout this paper is
\begin{eq}%
\norm M\norm=\max\{ \norm Mv\norm_\infty:\norm v\norm_\infty\leq 1\} =\max_{1\leq i\leq n}\left\{ \sum%
\!{\rule[-0.6em]{0em}{1.8em}}%
_{j=1}^n |M_{i,j}|\right\}.\label{matrixnorm}
\end{eq}

\begin{lemma}\label{density}
The set $\cS$ of all $A\in\cE$ with no negative eigenvalues is a
dense relatively open subset of $\cE$.
\end{lemma}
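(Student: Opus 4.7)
My plan is to establish relative openness and density separately. The former follows from continuity of spectra, while the latter exploits the one-parameter semigroup supplied by embeddability.

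For relative openness, I would fix $A\in\cS$. The compact set $\Spec(A)$ is disjoint from the closed set $(-\infty,0]$ and hence separated from it by a positive distance. Because the eigenvalues of a matrix depend continuously on its entries, every matrix $A'$ in a sufficiently small neighbourhood of $A$ with respect to the norm (\ref{matrixnorm}) also satisfies $\Spec(A')\cap(-\infty,0]=\emptyset$. Intersecting this neighbourhood with $\cE$ produces a relative neighbourhood of $A$ in $\cE$ contained in $\cS$.

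For density, given $A\in\cE$, I would pick a real matrix $B$ with non-negative off-diagonal entries and vanishing row sums such that $A=\rme^B$ (furnished by the very definition of embeddability recalled in the introduction), and then set $A_t=\rme^{tB}$ for $t>0$. For each such $t$ the matrix $tB$ has the same sign structure and row-sum property, so $A_t\in\cE$, and clearly $A_t\to A$ as $t\to 1$. By the spectral mapping theorem the eigenvalues of $A_t$ are $\{\rme^{t\mu}:\mu\in\Spec(B)\}$. A real eigenvalue $\mu$ of $B$ always contributes a positive value, while an eigenvalue $\mu$ with $\Im(\mu)\ne 0$ gives $\rme^{t\mu}\in(-\infty,0]$ exactly when $t\,\Im(\mu)\in\pi(2\Z+1)$, which rules out only a discrete set of $t$'s. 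Taking the union of these exceptional sets over the finitely many non-real eigenvalues of $B$ yields a set $E\subset\R$ with no accumulation points, so any $t\in(1-\eps,1+\eps)\setminus E$ produces a matrix $A_t\in\cS$ arbitrarily close to $A$.

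The only point that needs care is simultaneity: one must avoid bad $t$-values for every non-real eigenvalue of $B$ at once. Because $\Spec(B)$ is finite this is immediate, so no genuine obstacle appears. I expect the argument to go through essentially as sketched, with the spectral mapping identity $\Spec(\rme^{tB})=\exp(t\Spec(B))$ doing all the heavy lifting and the embeddability of the perturbations $A_t$ coming for free from the scaling $B\mapsto tB$.
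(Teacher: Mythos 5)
Your proposal is correct and follows essentially the same route as the paper: relative openness by continuity of the spectrum under perturbation, and density by moving along the semigroup $t\mapsto\rme^{tB}$ and noting that negative eigenvalues of $\rme^{tB}$ occur only for $t$ in a discrete set (when $t\,\Im(\mu)$ is an odd multiple of $\pi$), so suitable $t$ near $1$ give matrices in $\cS$ arbitrarily close to $A$.
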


\begin{proof}
If $A\in\cS$ then a simple perturbation theoretic argument implies
that there exists $\eps>0$ such that $C$ has no negative eigenvalues
for any $n\times n$ matrix satisfying $\norm A-C\norm <\eps$. This
implies that $\cS$ is relatively open in $\cE$.

If $A\in\cE$ then $A=\rme^B$ for some Markov generator $B$. If
$\{x_r+iy_r\}_{r=1}^n$ is the set of eigenvalues of $B$ and $t> 0$
then $\rme^{Bt}$ has a negative eigenvalue if and only if
$ty_r=\pi(2m+1)$ for some $r$ and some integer $m$. The set of such
$t$ is clearly discrete. It follows that $\rme^{Bt} \in \cS$ for all
$t$ close enough to $1$ except possibly for $1$ itself. Since
$\lim_{t\to 1}\norm \rme^{Bt}-A\norm =0$, we conclude that $\cS$ is
dense in $\cE$.

\end{proof}

The following example shows that the density property in
Lemma~\ref{density} depends on the embeddability hypothesis.

\begin{example} \end{example}
The Markov matrix
\[
A=\left[ \begin{array}{cc}
1/3&2/3\\
2/3&1/3
\end{array}\right]
\]
has $\Spec(A)=\{ 1,-1/3\}$. If $0<\eps <1/3$, any matrix close
enough to $A$ also has a single eigenvalue $\lam$ satisfying
$|\lam+1/3|<\eps$ by a standard perturbation theoretic argument.
Since $A$ has real entries the complex conjugate of $\lam$ is also
an eigenvalue, so $\lam$ must be real and negative. Therefore the
set of Markov matrices with no negative eigenvalues is relatively
open but not dense in the set of all Markov matrices, at least for
$n=2$. The example may be used to construct a similar example for
every $n> 2$.\block

We will need Lemma~\ref{matrixdiscr} and its corollary in the proof
of Theorem~\ref{embeddistinct}.

\begin{lemma}\label{matrixdiscr}
There exists a polynomial $p$ in the coefficients of an $n\times n$
matrix $A$ such that $A$ has a multiple eigenvalue in the algebraic
sense if and only if $p=0$. We call $p$ the discriminant of $A$.
\end{lemma}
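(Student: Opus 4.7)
The plan is to reduce the statement to the classical fact that a univariate polynomial has a repeated root if and only if its discriminant (a polynomial in its coefficients) vanishes, and then to compose this with the characteristic polynomial of $A$.

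First, I would form the characteristic polynomial
\[
\chi_A(\lam) = \det(\lam I - A) = \lam^n + c_{n-1}(A)\lam^{n-1} + \cdots + c_0(A),
\]
and observe that each coefficient $c_k(A)$ is a polynomial in the entries $A_{i,j}$, since it is the sum of certain signed $(n-k)\times(n-k)$ principal minors of $A$. By definition, $A$ has a multiple eigenvalue in the algebraic sense precisely when $\chi_A$ has a repeated root.

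Next, I would invoke the standard resultant criterion: a monic polynomial $q(\lam)$ of degree $n$ has a repeated root if and only if $q$ and its derivative $q^\pr$ share a common root, which is equivalent to the vanishing of the resultant $\Res(q,q^\pr)$. This resultant is the determinant of the Sylvester matrix of $q$ and $q^\pr$, and is therefore a fixed polynomial expression in the coefficients of $q$. Substituting $q=\chi_A$, so that each coefficient of $q$ becomes a polynomial in the entries of $A$, yields
\[
p(A) := \Res(\chi_A,\chi_A^\pr),
\]
which is a polynomial in the $A_{i,j}$ that vanishes exactly when $A$ has a repeated eigenvalue.

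This argument is entirely formal; there is no real obstacle, since the two ingredients (the polynomial dependence of $\chi_A$'s coefficients on the entries of $A$, and the resultant formula for repeated roots) are both standard. The only mild subtlety is that one should work with the algebraic notion of multiplicity, i.e. the multiplicity of the root in $\chi_A$, which matches the statement of the lemma. If desired, one may normalise $p$ by the usual factor $(-1)^{n(n-1)/2}$ so that $p$ agrees with the discriminant of $\chi_A$ in the customary sense, but for the purposes of this lemma only the vanishing locus matters.
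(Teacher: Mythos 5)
Your proposal is correct and takes essentially the same route as the paper: both reduce the statement to the fact that the coefficients of the characteristic polynomial are polynomials in the entries of $A$ and that a monic polynomial has a repeated root exactly when its discriminant vanishes. The only cosmetic difference is that you realise the discriminant as the resultant of $\chi_A$ and $\chi_A^\pr$ (a Sylvester determinant, hence manifestly polynomial in the coefficients), whereas the paper describes it as the square of the Vandermonde determinant in the roots; these define the same vanishing locus.
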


\begin{proof}
$A$ has a multiple eigenvalue if and only if its characteristic
polynomial $q(z)=z^n+a_1z^{n-1}+\ldots +a_n$ has a multiple root;
the coefficients of $q$ are themselves polynomials in the entries of
$A$. Moreover $q$ has a multiple root if and only if its
discriminant (the square of its Vandermonde determinant) vanishes,
and the discriminant of $q$ is a polynomial in $a_1,\ldots, a_n$.
\end{proof}

\begin{corollary}\label{cordiscr}
If $A_0,\, A_1$ are two $n\times n$ matrices then either
$A_z=(1-z)A_0+zA_1$ has a multiple eigenvalue for all $z\in\C$ or
this happens only for a finite number of $z$.
\end{corollary}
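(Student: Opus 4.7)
The plan is to invoke Lemma~\ref{matrixdiscr} directly and reduce the statement to a dichotomy for a single-variable polynomial. Let $p$ be the discriminant polynomial in the $n^2$ matrix entries provided by that lemma, so $A$ has a multiple eigenvalue if and only if $p(A)=0$.

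Next I would substitute the pencil $A_z=(1-z)A_0+zA_1$ into $p$. Each entry $(A_z)_{i,j}=(1-z)(A_0)_{i,j}+z(A_1)_{i,j}$ is an affine function of the scalar variable $z$, so $p(A_z)$ is obtained by composing a polynomial in $n^2$ variables with affine functions of $z$; the result is a polynomial $P(z)\in\C[z]$ of degree at most $\deg p$.

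The dichotomy now follows from elementary algebra: either $P$ is the zero polynomial, in which case $A_z$ has a multiple eigenvalue for every $z\in\C$, or $P$ is a nonzero polynomial of degree at most $\deg p$, in which case it has at most $\deg p$ roots, giving only finitely many values of $z$ for which $A_z$ has a multiple eigenvalue.

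There is no real obstacle here; the only thing to be careful about is the observation that affine substitution preserves the polynomial character, so that the result is genuinely a polynomial in one variable rather than some more exotic object. Once Lemma~\ref{matrixdiscr} is in hand, the corollary is essentially immediate.
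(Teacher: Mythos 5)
Your proposal is correct and follows exactly the paper's argument: substitute the affine pencil into the discriminant of Lemma~\ref{matrixdiscr} to obtain a polynomial in $z$, which either vanishes identically or has finitely many roots. The paper states this in one sentence; your write-up merely spells out the same reasoning in more detail.
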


\begin{proof}
The discriminant of $A_z$ is a polynomial in $z$, which has a finite
number of roots unless it vanishes identically.
\end{proof}

\begin{theorem}\label{embeddistinct}
The set $\cT$ of all $n\times n$ embeddable Markov matrices that
have $n$ distinct eigenvalues is relatively open and dense in the
set $\cE$ of all embeddable Markov matrices.
\end{theorem}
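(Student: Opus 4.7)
For relative openness, suppose $A\in\cT$. By Lemma~\ref{matrixdiscr} the discriminant polynomial $p$ satisfies $p(A)\neq 0$, and since $p$ is continuous in the matrix entries there is a norm neighbourhood of $A$ on which $p$ is non-zero. Every embeddable Markov matrix in this neighbourhood has $n$ distinct eigenvalues, and so lies in $\cT$.

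For density, fix $A\in\cE$, write $A=\rme^B$ for some Markov generator $B$, and let $\eps>0$. The plan is a two-step perturbation. First, fix any Markov generator $B_1$ with $n$ distinct eigenvalues; for example, $B_1=P-I$, where $P$ is the cyclic permutation matrix, has eigenvalues $\rme^{2\pi ik/n}-1$ for $k=0,\dots,n-1$. For $z\in[0,1]$ the convex combination $B_z=(1-z)B+zB_1$ is again a Markov generator. By Corollary~\ref{cordiscr} applied to $B$ and $B_1$, the discriminant of $B_z$ is a polynomial in $z$ that is non-zero at $z=1$, so it has only finitely many zeros. Using continuity of $z\mapsto\rme^{B_z}$ at $z=0$, pick $z_0>0$ sufficiently small, avoiding these zeros, so that $\norm \rme^{B_{z_0}}-A\norm<\eps/2$; then $B_{z_0}$ has $n$ distinct eigenvalues $\mu_1,\dots,\mu_n$.

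\textbf{The main obstacle} is that distinctness of the $\mu_j$ does not imply distinctness of the $\rme^{\mu_j}$, because of the $2\pi i$-periodicity of the exponential; even a generic perturbation of $B$ might fail to produce an exponential in $\cT$. I handle this by a small rescaling, mirroring the argument used in Lemma~\ref{density}. The eigenvalues of $\rme^{sB_{z_0}}$ are $\rme^{s\mu_j}$, and two of them coincide only when $s(\mu_j-\mu_k)\in 2\pi i\Z$ for some $j\neq k$, a discrete subset of $\R$ since $\mu_j\neq\mu_k$. Choose $s$ in the complement, close enough to $1$ that $\norm \rme^{sB_{z_0}}-\rme^{B_{z_0}}\norm<\eps/2$. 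Then $\rme^{sB_{z_0}}$ is embeddable (with generator $sB_{z_0}$), has $n$ distinct eigenvalues, and lies within $\eps$ of $A$, proving $\cT$ is dense in $\cE$.
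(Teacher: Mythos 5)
Your proposal is correct and follows essentially the same route as the paper: perturb the generator along the segment towards the cyclic generator $P-I$, use Corollary~\ref{cordiscr} to get distinct eigenvalues of the perturbed generator for small parameter values, and then rescale by $s$ close to $1$ to avoid differences in $2\pi i\Z$, exactly as in the paper's proof (your explicit discriminant argument for openness just spells out the paper's "standard perturbation" remark). No gaps.
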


\begin{proof}
A standard argument from perturbation theory establishes that $\cT$
is relatively open in $\cE$, so we only need to prove its density.

Let $A=\rme^{B_0}$ where $B_0$ is a Markov generator, and let
$\eps>0$. Then put $B_t=(1-t)B_0+tB_1$ where
\[
(B_1)_{r,s}=\left\{ \begin{array}{cc}
-1&\mbox{ if } r=s,\\
1&\mbox{ if } r+1=s,\\
1&\mbox{ if } r=n,\, s=1,\\
0&\mbox{ otherwise. }
\end{array}\right.
\]
One sees immediately that $B_t$ is a Markov generator for all $t\in
[0,1]$ and that it has $n$ distinct eigenvalues if $t=1$.
Corollary~\ref{cordiscr} now implies that the eigenvalues of $B_t$
are distinct for all sufficiently small $t>0$. By further
restricting the size of $t>0$ we may also ensure that $\norm
\rme^{B_t}-A\norm <\eps/2$.

Having chosen $t$, we put $B=sB_t$ where $s\in\R$ is close enough to
$1$ so that $\norm \rme^{B_t}-\rme^B\norm <\eps/2$; we also choose
$s$ so that if $\lam_1,\, \lam_2$ are any two eigenvalues of $B_t$
then $s(\lam_1-\lam_2)\notin 2\pi i\Z$. These conditions ensure that
$\norm \rme^B-A\norm<\eps$ and that $\rme^B$ has $n$ distinct
eigenvalues.
\end{proof}

The following lemma may be contrasted with the fact that a complex
number $\lam$ such that $|\lam |=1$ is the eigenvalue of some
$n\times n$ Markov matrix if and only if $\lam^r=1$ for some $r\in
\{1,2,\ldots,n\}$; see \cite[Chap~7, Theorem~1.4]{minc}. Permutation
matrices provide examples of such spectral behaviour. The lemma has
been extended to an infinite-dimensional context in \cite{EBDtriv}.

\begin{lemma}[Elfving, \cite{elfving}] \label{periph}
If $A$ is an embeddable Markov matrix and $\lam\not= 1$ is an
eigenvalue of $A$ then $|\lam|<1$.
\end{lemma}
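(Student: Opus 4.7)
My plan is to reuse the decomposition $B=C-\del I$ that already appeared in the earlier argument (where $B$ is a generator with $A=\rme^B$, $C$ has non-negative entries, and $\del\geq 0$) and to combine it with the matrix norm bound (\ref{matrixnorm}) applied to $C$.

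First I would make a specific choice: put $\del=\max_i(-B_{i,i})=\max_i\sum_{j\not= i}B_{i,j}$ so that $C=B+\del I$ has non-negative off-diagonal entries (inherited from $B$) and non-negative diagonal entries (by construction). Since the row sums of $B$ vanish, every row of $C$ sums to exactly $\del$, which means $\norm C\norm=\del$ in the norm (\ref{matrixnorm}). Consequently every eigenvalue $\nu$ of $C$ satisfies $|\nu|\leq\del$.

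Next I would translate the hypothesis on $\lam$ back through the exponential. Since $\Spec(A)=\exp(\Spec(B))$ there is $\mu\in\Spec(B)$ with $\lam=\rme^\mu$, and correspondingly $\nu=\mu+\del\in\Spec(C)$. Suppose $|\lam|\geq 1$; then $\Re(\mu)\geq 0$, so $\Re(\nu)\geq\del$. Combined with $|\nu|\leq\del$ this forces $\Re(\nu)=\del$ and $\Im(\nu)=0$, hence $\mu=0$ and $\lam=1$. The contrapositive gives $\lam\not= 1\Rightarrow|\lam|<1$.

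There is no real obstacle here; the only point to check carefully is that the specific $\del$ chosen above makes $C$ entrywise non-negative with constant row sum $\del$, which is what turns the formal estimate $\norm C\norm=\del$ into the needed bound on $\Spec(C)$. Everything else is a one-line consequence of $\Re(\mu)=\log|\lam|$ and the geometry of the disk $\{|\nu|\leq\del\}$ touching the vertical line $\Re(\nu)=\del$ only at the single point $\nu=\del$.
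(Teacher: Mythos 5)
Your argument is correct and is essentially the paper's: the paper also writes the generator as $B=c(C-I)$ with $C$ Markov (equivalently your $B=C-\del I$ with $C\geq 0$ and constant row sums $\del$), bounds $\Spec(C)$ by the row-sum norm (contraction on $\ell^\infty$), concludes that every nonzero eigenvalue of $B$ has strictly negative real part, and exponentiates. The only difference is that you construct the decomposition explicitly via $\del=\max_i(-B_{i,i})$ rather than citing it, which makes the proof self-contained but does not change the route.
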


\begin{proof}
Our hypotheses imply by \cite[Lemma~12.3.5]{LOTS} that
$\Spec(A)=\exp(\Spec(B))$ where $B=c(C-I)$, $c>0$ and $C$ is a
Markov matrix. Since $C$ is a contraction when considered as acting
in $\C^n$ with the $l^\infty$ norm, $\Spec(C)\subset \{ z:|z|\leq
1\}$. Therefore every eigenvalue $\lam$ of $B$ except $0$ satisfies
$\Re(\lam)<0$. The lemma follows.
\end{proof}

The main application of the following theorem may be to establish
that certain Markov matrices arising in applications are not
embeddable, and hence either that the entries are not numerically
accurate or that the underlying process is not autonomous. The
theorem is a quantitative strengthening of Lemma~\ref{periph}. It is
of limited value except when $n$ is fairly small, but this is often
the case in applications.

\begin{theorem}[Runnenberg, \cite{runn, singer}]\label{maintheorem}
If $n\geq 3$ and the $n\times n$ Markov matrix $A$ is embeddable
then its spectrum is contained in the set
\[
\{ r\rme^{i\theta}:-\pi\leq \theta\leq \pi, 0<r\leq r(\theta)\}
\]
where
\[
r(\theta)= \exp(-\theta \tan(\pi/n)).
\]
\end{theorem}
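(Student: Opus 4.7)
My plan is to reduce Runnenberg's inequality to a classical wedge bound on the eigenvalues of $n\times n$ Markov matrices, and then invoke that bound. By Lemma~\ref{periph} one may write $A=\rme^B$ with $B=c(C-I)$, $c>0$, and $C$ Markov, so every eigenvalue of $A$ has the form $\lam=\rme^{\zeta}$ with $\zeta=c(\mu-1)$ for some $\mu\in\Spec(C)$. Writing $\lam=r\rme^{i\theta}$ with $\theta\in[-\pi,\pi]$ gives $\Re\zeta=\log r\leq 0$ and $\Im\zeta=\theta+2\pi k$ for some $k\in\Z$, so $|\Im\zeta|\geq|\theta|$. Using complex-conjugation symmetry of the spectrum of $A$, the stated inequality is equivalent to $r\leq\rme^{-|\theta|\tan(\pi/n)}$, which is in turn implied by
\begin{eq}
|\Re\zeta|\geq|\Im\zeta|\tan(\pi/n).\label{wedgecond}
\end{eq}
Since $c>0$, (\ref{wedgecond}) is in turn equivalent to the geometric assertion that every eigenvalue $\mu\neq1$ of the Markov matrix $C$ satisfies $|\arg(1-\mu)|\leq\pi/2-\pi/n$; that is, $1-\mu$ lies in the closed sector of half-angle $\pi/2-\pi/n$ about the positive real axis.

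The substance of the theorem is therefore this wedge bound on the eigenvalues of Markov matrices. It is sharp: the cyclic permutation matrix has eigenvalue $\mu=\rme^{\pm2\pi i/n}$ with $\arg(1-\mu)=\mp(\pi/2-\pi/n)$, so any proof must exploit a feature specific to the structure of $n\times n$ Markov matrices. I would establish it by appealing to the classical Karpelevich--Dmitriev--Dynkin description of the region $\Theta_n\subset\C$ of all eigenvalues of $n\times n$ Markov matrices: its tangent cone at the vertex $\mu=1$ is precisely the closed cone spanned by the rays from $1$ to $\rme^{\pm2\pi i/n}$. In the form used by Runnenberg this is cited via \cite{runn,singer}. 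For small $n$ one may also give an entirely self-contained proof; when $n=3$, parameterising $B$ by its off-diagonal entries and computing $\Re\zeta$ and $|\zeta|^2$ from $\tr(B)$ and the sum of the three $2\times2$ principal minors of $B$ produces an identity expressing $(\Re\zeta)^2-3(\Im\zeta)^2$ as a manifestly non-negative combination of squares of differences of row-sums and products of ``opposite'' off-diagonal entries, which is (\ref{wedgecond}) in that case.

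The main obstacle is the wedge bound itself. Once it is in hand the theorem drops out from
\[
\log r=\Re\zeta\leq-|\Im\zeta|\tan(\pi/n)\leq-|\theta|\tan(\pi/n),
\]
giving $r\leq\rme^{-|\theta|\tan(\pi/n)}$. All the technical content sits in the wedge bound on the spectra of Markov matrices; the passage from that bound to Runnenberg's inequality is routine principal-branch bookkeeping.
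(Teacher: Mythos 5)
Your argument is correct and essentially the paper's own: you write $A=\rme^B$ with $B=c(C-I)$, $C$ Markov, reduce Runnenberg's bound to the wedge estimate $\Spec(C-I)\subseteq\{-u+iv:u\geq 0,\ |v|\leq u\cot(\pi/n)\}$, and obtain that wedge from the Karpelevi\v{c}/Dmitriev--Dynkin theorem before exponentiating with the same principal-argument bookkeeping, exactly as in the paper's proof of Theorem~\ref{maintheorem}. One small caution: what you must import from that classical theorem is the global containment of the eigenvalue region in the wedge (equivalently, that eigenvalues with argument in $[0,2\pi/n]$ lie on the origin side of the chord from $1$ to $\rme^{2\pi i/n}$, combined with the unit-disc bound), not merely the tangent-cone statement at $\mu=1$, which taken literally is a local and strictly weaker assertion.
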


\begin{proof}
This depends on two facts, firstly that $\Spec(A)=\exp(\Spec(B))$
where $B=c(C-I)$, $c>0$ and $C$ is a Markov matrix. Secondly
\begin{eqnarray}
\Spec(C-I) &\subseteq & \{ z:|\arg(z)|\geq \pi/2+\pi/n\}\label{argbound}\\
&=& \{ -u+iv: u\geq 0, |v|\leq u\cot(\pi/n)\}.\label{karpbound}
\end{eqnarray}
by applying a theorem of Karpelevi\v{c} to $C$ and then deducing

(\ref{karpbound}) from that; see \cite{karp} or \cite[Chap.~7,
Theorem~1.8]{minc}. The relevant boundary curve (actually a straight
line segment from $1$ to $\rme^{2\pi i/n}$) is the case $q=0$, $p=1$
and $r=n$ of $\lam^q(\lam^p-t)^r=(1-t)^r$, where $0\leq t\leq 1$.
The small part of the theorem of Karpelevi\v{c} that we need was
proved by Dmitriev and Dynkin; see \cite[Chap.~8,
Theorem~1.7]{minc}.
\end{proof}

We turn now to the question of uniqueness. The first example of a
Markov matrix $A$ that can be written in the form $A=\rme^B$ for two
different Markov generators was given by Speakman in \cite{speak};
Example~\ref{twogen} provides another. The initial hypothesis of our
next result holds for most embeddable Markov matrices by
Theorem~\ref{embeddistinct}.

\begin{corollary}\label{lemmalog}
Let $A$ be an invertible $n\times n$ Markov matrix with distinct
eigenvalues $\lam_1,\ldots, \lam_n$.
\begin{enumerate}
\item The solutions of $\rme^B=A$ form a discrete set and they all commute with each other and with $A$.
\item Only a finite number of the solutions of $\rme^B=A$ can be Markov generators.
\item If
\begin{eq}
|\lam_r|> \exp(-\pi\tan(\pi/n))\label{lambdamin}
\end{eq}
for all $r$ then only one of the solutions of $\rme^B=A$ can be a
Markov generator, namely the principal logarithm.
\end{enumerate}
\end{corollary}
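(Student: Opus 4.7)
For \textbf{part 1}, the plan is to exploit that $A$ has $n$ distinct eigenvalues. Writing $A = TDT\inv$ with $D = {\rm diag}(\lam_1,\ldots,\lam_n)$, any solution $B$ of $\rme^B = A$ commutes with $A$ (since it commutes with $\rme^B$), and the commutant of a matrix with simple spectrum consists precisely of the matrices of the form $TET\inv$ with $E$ diagonal. The equation $\rme^B = A$ then reduces entrywise to $\rme^{E_{rr}} = \lam_r$, which forces $E_{rr} = \log\lam_r + 2\pi i k_r$ for some integer $k_r$, with reality constraints linking the $k_r$'s on complex-conjugate pairs of eigenvalues. The resulting parametrisation by integer tuples exhibits the solution set as discrete; and any two solutions are simultaneously diagonal in the basis of $T$, so they commute pairwise and with $A$.

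For \textbf{part 2}, I would argue by contradiction. Part 1 renders the solution set discrete in the finite-dimensional space of real $n\times n$ matrices, so any infinite sequence $\{B_m\}$ of Markov-generator solutions must satisfy $\norm B_m\norm\to\infty$. Normalise $\widetilde B_m := B_m/\norm B_m\norm$; these lie on the unit sphere, which is compact, so a subsequence converges to some $B^*$ with $\norm B^*\norm = 1$. The properties of being real, having non-negative off-diagonal entries, having vanishing row sums, and commuting with $A$ are all preserved under scaling and under limits, so $B^*$ is itself a Markov generator. The eigenvalues of $\widetilde B_m$ equal $\mu_r^{(m)}/\norm B_m\norm$, whose real parts $(\log|\lam_r|)/\norm B_m\norm$ tend to $0$; hence every eigenvalue of $B^*$ is purely imaginary. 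Writing $B^* = c(C-I)$ with $c\geq 0$ and $C$ Markov, the eigenvalues $c(\nu_r - 1)$ being purely imaginary together with $|\nu_r|\leq 1$ forces $\nu_r = 1$ for every eigenvalue of $C$. Since $\tr(C)=n$ and each $C_{ii}\leq 1$, all diagonal entries of $C$ equal $1$, so the row-sum condition forces the off-diagonals to vanish and $C = I$, hence $B^* = 0$, contradicting $\norm B^*\norm = 1$.

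For \textbf{part 3}, I would apply the Runnenberg bound directly to $B$. If $B$ is a Markov generator then $B = c(C-I)$, and~(\ref{karpbound}) gives $|\Im(\mu_r)|\leq -\Re(\mu_r)\cot(\pi/n) = -\log|\lam_r|\cot(\pi/n)$ for every eigenvalue $\mu_r$ of $B$. Substituting $\mu_r = \log\lam_r + 2\pi i k_r$ this reads
\[
|\arg\lam_r + 2\pi k_r|\leq -\log|\lam_r|\cot(\pi/n),
\]
and the hypothesis~(\ref{lambdamin}) makes the right-hand side strictly less than $\pi$. Since the existence of the principal logarithm forces $\arg\lam_r\in(-\pi,\pi)$, the only integer $k_r$ compatible with $|\arg\lam_r + 2\pi k_r|<\pi$ is $k_r = 0$, so $\mu_r = \log\lam_r$ for every $r$ and $B$ is the principal logarithm.

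The main obstacle is part 2: one has to convert the qualitative discreteness produced by part 1 into the quantitative statement that the Markov-generator solutions are bounded. The normalisation-and-limit argument above, together with the rigidity statement that a Markov matrix whose spectrum is $\{1\}$ must be $I$, is what supplies this missing bound.
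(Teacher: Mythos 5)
Your argument is correct, and parts 1 and 3 are essentially the paper's own proof: part 1 via simultaneous diagonalisation in the (fixed, one-dimensional) eigenbasis of $A$, which is the same content as the paper's observation that $B$ has the same eigenvectors as $A$ and is a polynomial in $A$, and part 3 via the sector bound (\ref{karpbound}) combined with (\ref{lambdamin}), exactly as in the text. Where you genuinely diverge is part 2. The paper gets finiteness quantitatively: a Markov generator $B$ has $\Spec(B)$ in the cone of (\ref{karpbound}), so $|\Im(\mu_r)|\leq -\log|\lam_r|\cot(\pi/n)$, which together with (\ref{takelog}) leaves only finitely many admissible integer shifts $k_r$ and hence finitely many $B$; this also yields the explicit enumeration algorithm the paper cites from Israel--Rosenthal--Wei. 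You instead give a soft compactness argument: discreteness (plus the automatic closedness of $\{B:\rme^B=A\}$, which you should state, and strictly speaking you extract a subsequence with $\norm B_m\norm\to\infty$ rather than the whole sequence) forces any infinite family of Markov-generator solutions to be unbounded, and the normalised limit $B^*$ would be a nonzero Markov generator with purely imaginary spectrum, which your rigidity argument ($C=I$, or more simply: the trace of a Markov generator is real and nonpositive, so purely imaginary spectrum forces zero diagonal and hence $B^*=0$) rules out. This buys you a part 2 that avoids Karpelevi\v{c}/Dmitriev--Dynkin entirely and uses only elementary structure of Markov generators, at the cost of being purely qualitative (no bound on the number of generators, no algorithm); note, though, that you still need (\ref{karpbound}) for part 3, so the heavier input is not avoided overall. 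One last small point, which the paper also glosses: if some $\lam_r\in(-\infty,0)$ the principal logarithm of $A$ is undefined, but your bound $|\Im(\mu_r)|<\pi$ then shows no Markov generator exists at all, so the uniqueness claim holds vacuously; it is worth saying this explicitly when you invoke ``the existence of the principal logarithm''.
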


\begin{proof}
\begin{enumerate}
\item
Since $\Spec(A)=\exp(\Spec(B))$, each $B$ must have $n$ distinct
eigenvalues $\mu_1,\ldots,\mu_n$ and the corresponding eigenvectors
form a basis in $\C^n$. These eigenvectors are also eigenvectors for
$A$ and
\begin{equation}
\lam_r=\rme^{\mu_r}\label{takelog}
\end{equation}
for all $r$. It follows that $B$ can be written as a polynomial
function of $A$. For each $\lam_r$, the equation (\ref{takelog}) has
a discrete set of solutions $\mu_r$.
\item
If $A$ is an invertible Markov matrix with distinct eigenvalues and
the solution $B$ of $\rme^B=A$ is a Markov generator then every
eigenvalue $\mu_r$ of $B$ lies in the sector $\{ -u+iv: u\geq 0,
|v|\leq u\cot(\pi/n)\}$ by (\ref{karpbound}). Combining this
restriction on the imaginary parts of the eigenvalues with
(\ref{takelog}) reduces the set of such $B$ to a finite number. See
\cite[Theorem~6.1]{israel} for items~1 and 2, and for an algorithm
implementing item~2.
\item
We continue with the assumptions and notation of item~2. The
assumption (\ref{lambdamin}) implies that if $\mu_r=-u_r+iv_r$ then
$u_r <\pi\tan(\pi/n)$. Item~2 now yields $|v_r|<\pi$. Hence $\mu_r$
is the principal logarithm of $\lam_r$ and $B$ is the principal
logarithm of $A$.
\end{enumerate}
\end{proof}

The conclusions of the above corollary do not hold if $A$ has
repeated eigenvalues or a non-trivial Jordan form; see
\cite{cuth,israel}. For example the $n \times n$ identity matrix has
a continuum of distinct logarithms $B$ which do not all commute; if
the eigenvalues of $B$ are chosen to be $\{ 2\pi r i:1\leq r\leq
n\}$, then the possible $B$ are parametrized by the choice of an
arbitrary basis as its set of eigenvectors. The general
classification of logarithms is given in \cite{gant} and
\cite[Theorem~1.28]{highambook}. These comments reveal a numerical
instability in the logarithm of a matrix if it has two or more
eigenvalues that are very close to each other.

The following provides a few other conditions that imply the
uniqueness of a Markov generator $B$ such that $A=\rme^B$.

\begin{theorem}[Cuthbert, \cite{cuth1,cuth}]
Let $A=\rme^B$ where $B$ a Markov generator. Then
(\ref{one})$\,\Rightarrow$(\ref{two})$\,\Rightarrow%
$(\ref{three})$\,\Rightarrow$(\ref{four}), where
\begin{eqnarray}
\rme^{-\pi}&<& \det(A)\,\,\leq\,\, 1,\label{one}\\
-\pi &<& \tr(B)\,\,\leq \,\,0,\label{two}\\
\norm B+\bet I\norm&<& \pi,\hspace{2em}\mbox{ where }\bet=\max_{1\leq i\leq n}\{ |B_{i,i}|\}, \label{three}\\
\Spec(B)&\subseteq & \{ z:\Im(z)|<\pi\}. \label{four}
\end{eqnarray}
If $A$ is a Markov matrix that has distinct eigenvalues and
$\det(A)>\rme^{-\pi}$ then its only possible Markov generator is its
principal logarithm $\log(A)$.
\end{theorem}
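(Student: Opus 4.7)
The plan is to establish the three implications in order along the chain, noting that $(\ref{two})$ feeds directly into the proof of $(\ref{three})$ and similarly $(\ref{three})$ into $(\ref{four})$, and then to deduce the uniqueness statement by combining the chain with Corollary~\ref{lemmalog}. For $(\ref{one})\imp(\ref{two})$, I would use the identity $\det(A)=\rme^{\tr(B)}$ established in the earlier lemma on properties of embeddable Markov matrices; since $A$ is Markov its eigenvalues lie in the closed unit disk so $|\det(A)|\le 1$, combined with $\det(A)>0$ this gives $\det(A)\le 1$, and taking the (real) logarithm of both sides yields the two inequalities in $(\ref{two})$ from those in $(\ref{one})$.

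The main step, and the only one that requires the Markov generator structure in a substantive way, is $(\ref{two})\imp(\ref{three})$. The key observation is that $B+\bet I$ has entrywise non-negative entries: the off-diagonal entries of $B$ are non-negative by assumption, while $B_{i,i}+\bet\ge B_{i,i}+|B_{i,i}|=0$ since every $B_{i,i}\le 0$ and $\bet=\max_i|B_{i,i}|=\max_i(-B_{i,i})$. Using that $B$ has zero row sums, every row of $B+\bet I$ sums to $\bet$, so by the formula (\ref{matrixnorm}) for the $\ell^\infty$ operator norm one obtains $\norm B+\bet I\norm=\bet$ exactly. It then remains to bound $\bet$: writing $\bet=\max_i(-B_{i,i})\le\sum_i(-B_{i,i})=-\tr(B)<\pi$ by $(\ref{two})$ completes the implication. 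I expect this to be the main obstacle in the sense that it is where all three defining features of a Markov generator — non-negative off-diagonals, non-positive diagonals, and zero row sums — must be used simultaneously to turn an identity into an inequality with the correct constant.

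The implication $(\ref{three})\imp(\ref{four})$ is immediate from the spectral mapping $\Spec(B+\bet I)=\Spec(B)+\bet$: any $\mu\in\Spec(B)$ satisfies $|\mu+\bet|\le\norm B+\bet I\norm<\pi$, and $|\Im(\mu)|=|\Im(\mu+\bet)|\le|\mu+\bet|<\pi$. Finally, for the uniqueness claim, suppose $A$ has distinct eigenvalues $\lam_1,\ldots,\lam_n$ with $\det(A)>\rme^{-\pi}$, and let $B$ be any Markov generator with $\rme^B=A$. Then $(\ref{one})$ holds, hence by the chain $(\ref{four})$ holds. By Corollary~\ref{lemmalog}(1) the matrix $B$ has distinct eigenvalues $\mu_1,\ldots,\mu_n$ with $\rme^{\mu_r}=\lam_r$, and the bound $|\Im(\mu_r)|<\pi$ together with $\lam_r\notin(-\infty,0]$ (a consequence of the same corollary, since a negative $\lam_r$ would force $\Im(\mu_r)=\pm\pi$) forces each $\mu_r$ to be the principal logarithm of $\lam_r$. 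Since $B$ is a polynomial in $A$ by Corollary~\ref{lemmalog}(1) and $A$ has distinct eigenvalues, $B$ is uniquely determined by its eigenvalues on the shared eigenvector basis, so $B=\log(A)$.
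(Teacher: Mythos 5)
Your proposal is correct and follows essentially the same route as the paper: $\det(A)=\rme^{\tr(B)}$ for (\ref{one})$\Rightarrow$(\ref{two}), the non-negativity and constant row sums of $B+\bet I$ giving $\norm B+\bet I\norm=\bet\leq -\tr(B)<\pi$ for (\ref{two})$\Rightarrow$(\ref{three}), the spectral radius bound for (\ref{three})$\Rightarrow$(\ref{four}), and the argument of Corollary~\ref{lemmalog} for the uniqueness claim. You simply make explicit the details the paper leaves implicit (in particular why $\bet<\pi$ and why the eigenvalues of $A$ avoid $(-\infty,0]$), and these details are filled in correctly.
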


\begin{proof}\\
(\ref{one})$\Rightarrow$(\ref{two})
This uses $\det(A)=\rme^{\tr(B)}$.\\
(\ref{two})$\Rightarrow$(\ref{three})
This uses the fact that $B+\bet I$ has non-negative entries and its row sums all equal $\bet$, which satisfies $\bet <\pi$.\\
(\ref{three})$\Rightarrow$(\ref{four}) follows directly from
$\Spec(B+\bet I)\subseteq \{z:|z|<\pi\}$.\\
The final statement of the theorem follows the proof of
Corollary~\ref{lemmalog}.
\end{proof}

If $A^t$ is a one-parameter Markov semigroup then for every $t>0$
one may define $L(t)$ to be the number of Markov generators $B$ such
that $\rme^{Bt}=A^t$. Some general theorems concerning the
dependence of $L(t)$ on $t$ may be found in \cite{cuth,singer}.

\section{Regularization}

Let $\cG$ denote the set of  $n\times n$ Markov generators;
following the notation of Lemma~\ref{Markovlog}, $\cG$ is the set of
$G\in \cL$ such that $G_{i,j}\geq 0$ whenever $i\not= j$.

Let $A$ be a Markov matrix satisfying the assumptions of
Lemma~\ref{Markovlog}, for which $L=\log(A)$ does not lie in $\cG$.
There are several \emph{regularizations} of $L$, that is algorithms
that replace $L$ by some $G\in \cG$ that are (nearly) as close to
$L$ as possible. Kreinin and Sidelnikova \cite{KS} have compared
different regularization algorithms for several empirical examples
arising in finance and it appears that they all have similar
accuracy. The best approximation must surely depend on the matrix
norm used, but if one considers the physically relevant matrix norm (\ref{matrixnorm}) then we prove that the simplest method, called diagonal adjustment in \cite{KS}, also produces the best possible approximation. We emphasize that although $\cG$ is a closed convex cone, this does not imply that the best approximation is unique, because the matrix norm (\ref{matrixnorm}) is not strictly convex.

\begin{theorem}\label{approxtheorem}
Let $L\in \cL$ and define $B\in \cG$ by
\[
B_{i,j}=\left\{ \begin{array}{ll}
L_{i,j}&\mbox{ if }i\not= j\mbox{ and } L_{i,j}\geq 0,\\
0&\mbox{ if }i\not= j\mbox{ and } L_{i,j}< 0,
\end{array}\right.
\]
together with the constraint $\sum_{j=1}^nB_{i,j}=0$ for all $i$.
Then
\[
\norm L-B\norm =\min\{ \norm L-G\norm: G\in\cG\}.
\]
\end{theorem}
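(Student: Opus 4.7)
My plan is to compute $\norm L-B\norm$ in closed form and then show row by row that every $G\in\cG$ satisfies $\norm L-G\norm\geq\norm L-B\norm$. Fix a row index $i$ and partition the off-diagonal column indices into
\[
P_i=\{j\neq i:L_{i,j}\geq 0\},\qquad N_i=\{j\neq i:L_{i,j}<0\},
\]
and write $M_i=\sum_{j\in N_i}|L_{i,j}|$.

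First I would observe that the off-diagonal entries of $L-B$ vanish on $P_i$ and equal $L_{i,j}$ on $N_i$, while the zero row-sum condition on both $L$ and $B$ forces $(L-B)_{i,i}=M_i$. Hence the $i$-th row of $|L-B|$ sums to $2M_i$, so by (\ref{matrixnorm}) we have $\norm L-B\norm=2\max_i M_i$.

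For the lower bound, fix $G\in\cG$ and set
\[
X=\sum_{j\in P_i}(L_{i,j}-G_{i,j}),\qquad Y=-\sum_{j\in N_i}(L_{i,j}-G_{i,j}).
\]
Because $G_{i,j}\geq 0$ on $N_i$ while $L_{i,j}<0$ there, each summand of $Y$ is at least $|L_{i,j}|$, so $Y\geq M_i$, and moreover $\sum_{j\in N_i}|L_{i,j}-G_{i,j}|=Y$. The zero row-sum conditions on $L$ and $G$ give $(L-G)_{i,i}=Y-X$, and the triangle inequality yields $\sum_{j\in P_i}|L_{i,j}-G_{i,j}|\geq |X|$. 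Assembling these contributions, the $i$-th row sum of $|L-G|$ is bounded below by $|Y-X|+|X|+Y$, and a further application of the triangle inequality in the form $|Y-X|+|X|\geq Y$ produces $2Y\geq 2M_i$. Maximizing over $i$ gives $\norm L-G\norm\geq 2\max_i M_i=\norm L-B\norm$.

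The main obstacle is producing the factor of $2$ in the lower bound. The underlying reason is structural: each negative off-diagonal entry $L_{i,j}$ forces an error of magnitude at least $|L_{i,j}|$ in the off-diagonal position (because $G_{i,j}\geq 0$), and the common row-sum constraint then forces a compensating error of comparable size onto the diagonal. Any attempt by $G$ to reduce error by adjusting the entries indexed by $P_i$ can only shift the burden between diagonal and off-diagonal contributions, and the two triangle inequalities above show that the total row sum cannot drop below $2Y\geq 2M_i$.
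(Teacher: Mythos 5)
Your proposal is correct and follows essentially the same route as the paper: reduce to a single row via the norm (\ref{matrixnorm}), split the off-diagonal indices into $P$ and $N$, compute the exact row error $2\sum_{j\in N}|L_{i,j}|$ for $B$, and use the zero row-sum constraint plus triangle inequalities to show every $G\in\cG$ incurs at least that much in the same row. Your bookkeeping with $X$, $Y$ and the inequality $|Y-X|+|X|\geq Y$ is just a slightly tidier packaging of the paper's chain of estimates.
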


\begin{proof}
It follows from the definition of the matrix norm that we can deal
with the matrix rows one at a time. We therefore fix $i$ and put
\begin{eqnarray*}
\ell_j&=& L_{i,j},\\
P&=& \{j:j\not= i\mbox{ and } \ell_j\geq 0\},\\
N&=& \{j:j\not= i\mbox{ and } \ell_j< 0\},\\
\ell_P&=& \sum_{j\in P} \ell_j\geq 0,\\
\ell_N&=& -\sum_{j\in N} \ell_j\geq 0,\\
\end{eqnarray*}
so that $\ell_i=\ell_N-\ell_P$. We next put $b_j=B_{i,j}$, where $B$
is defined as in the statement of the theorem.  Thus
\[
b_j=\left\{ \begin{array}{ll}
\ell_j &\mbox{ if } j\in P,\\
0&\mbox{ if } j\in N,\\
-\ell_P&\mbox{ if } j=i.
\end{array}\right.
\]

A direct calculation shows that
\[
\ell_j-b_j=\left\{ \begin{array}{ll}
0 &\mbox{ if } j\in P,\\
\ell_j&\mbox{ if } j\in N,\\
\ell_N&\mbox{ if } j=i.
\end{array}\right.
\]
Therefore
\[
\norm \ell- b\norm_1=2\ell_N.
\]
Finally given $G\in \cG$ we define $g_j=G_{i,j}$ for all $j$. We
have
\begin{eqnarray*}
\norm \ell- g\norm_1&=& \sum_{j\not= i}|\ell_j-g_j|+|\sum_{j\not=
i}(\ell_j- g_j)|\\
&\geq & \sum_{j\in P}|\ell_j-g_j|+\sum_{j\in N}|\ell_j-g_j|\\
&& +\,\rule[-2ex]{0.05ex}{5ex} \sum_{j\in
N}(\ell_j-g_j)\,\rule[-2ex]{0.05ex}{5ex}-\rule[-2ex]{0.05ex}{5ex}
\sum_{j\in
P}(\ell_j-g_j)\,\rule[-2ex]{0.05ex}{5ex}\\
&\geq & \sum_{j\in P}|\ell_j-g_j|+\ell_N\\
&& +\,\rule[-2ex]{0.05ex}{5ex} \sum_{j\in
N}\ell_j\,\rule[-2ex]{0.05ex}{5ex}-
\sum_{j\in P}|\ell_j-g_j|\\
&=& 2\ell_N.
\end{eqnarray*}
\end{proof}

\begin{theorem}\label{approxtheorem}
Let $A$ be a Markov matrix such that $\Spec(A)\cap
(-\infty,0]=\emptyset$ and put $L=\log(A)$. If $B\in \cG$ and $\norm
L-B\norm =\eps$ then
\[
\norm A-\rme^B\norm \leq \min\{ 2,\rme^\eps -1\}\leq \min\{ 2,2\eps
\}.
\]
\end{theorem}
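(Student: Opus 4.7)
The plan is to prove two separate upper bounds, $\norm A-e^B\norm \leq 2$ and $\norm A-e^B\norm \leq e^\eps-1$, and then verify the elementary inequality $\min\{2,e^\eps-1\}\leq\min\{2,2\eps\}$.

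The first bound is immediate: $A$ is Markov by hypothesis, and since $B\in\cG$ the matrix $e^B$ is also Markov, so by (\ref{matrixnorm}) both have operator norm $1$, and the triangle inequality yields $\norm A-e^B\norm \leq 2$.

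For the bound $\norm A-e^B\norm \leq e^\eps-1$, I would set $D=L-B$ (so $\norm D\norm=\eps$) and study the ODE $F'(t)=LF(t)=(B+D)F(t)$, $F(0)=I$, whose solution is $F(t)=e^{tL}$. Variation of parameters converts this into the Volterra equation
\[
F(t)=e^{tB}+\int_0^t e^{(t-s)B}DF(s)\,\rmd s.
\]
Since $B\in\cG$, every $e^{tB}$ with $t\geq 0$ is Markov and therefore has norm $1$. Taking norms in the Volterra equation gives $\norm F(t)\norm\leq 1+\eps\int_0^t\norm F(s)\norm\,\rmd s$, and Gronwall's inequality then yields $\norm F(t)\norm\leq e^{\eps t}$. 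Substituting this bound back into the Volterra equation at $t=1$,
\[
\norm A-e^B\norm=\norm \int_0^1 e^{(1-s)B}DF(s)\,\rmd s\norm\leq \int_0^1\eps\,e^{\eps s}\,\rmd s = e^\eps-1.
\]

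The main obstacle is that $F(t)=e^{tL}$ has no \emph{a priori} norm bound, because $L$ need not lie in $\cG$ and its exponential can have entries of either sign; a naive single-step Duhamel estimate would therefore stall. The Volterra iteration sidesteps the difficulty by expressing $F(t)$ in terms of the well-behaved Markov semigroup $e^{tB}$, so that Gronwall can close the estimate using only $\norm e^{tB}\norm=1$. Finally, the inequality $\min\{2,e^\eps-1\}\leq\min\{2,2\eps\}$ is a routine calculus check: for $\eps\geq 1$ the right-hand side equals $2$, which dominates the left, while for $\eps\leq 1$ one verifies $e^\eps-1\leq 2\eps$ by noting that $\phi(\eps):=2\eps-e^\eps+1$ satisfies $\phi(0)=0$ and $\phi'(\eps)=2-e^\eps\geq 0$ on $[0,\ln 2]$, with $\phi(1)=3-e>0$.
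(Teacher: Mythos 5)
Your proposal is correct, and it rests on the same mechanism as the paper's proof: perturb around the semigroup $\rme^{tB}$ and exploit the fact that, since $B\in\cG$, each $\rme^{tB}$ ($t\geq 0$) is Markov and hence has norm $1$ in the norm (\ref{matrixnorm}), while the crude bound $\norm A-\rme^B\norm\leq\norm A\norm+\norm \rme^B\norm=2$ is identical in both arguments. The only real difference is technical packaging: the paper quotes the full perturbation (Dyson) series $\rme^L=\rme^B+\int_0^1 \rme^{B(1-t)}E\rme^{Bt}\,\rmd t+\ldots$ with $E=L-B$ and bounds the $k$th term by $\norm E\norm^k/k!$ using the simplex volume, summing to $\rme^{\eps}-1$; you stop after one Duhamel step, control $\norm \rme^{tL}\norm\leq \rme^{\eps t}$ by Gronwall, and then integrate, which gives exactly the same constant (indeed, iterating your Volterra equation regenerates the paper's series, and your Gronwall step is the standard substitute for summing it). Your approach avoids invoking the series expansion as a black box at the cost of the Gronwall lemma; neither buys a sharper estimate. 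The closing calculus check of $\rme^{\eps}-1\leq 2\eps$ on $[0,1]$ is slightly telegraphic (after $\ln 2$ you should note $\phi$ is decreasing there, so $\phi\geq\phi(1)=3-\rme>0$), but this is routine and the paper does not even spell it out.
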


\begin{proof}
If we put $E=L-B$ then the series expansion
\[
\rme^L=\rme^B+\int_{t=0}^1 \rme^{B(1-t)}E\rme^{Bt}\,\rmd t%
+\int_{t=0}^1\int_{s=0}^t
\rme^{B(1-t)}E\rme^{B(t-s)}E\rme^{Bs}\,\rmd s\rmd t+\ldots
\]
given in \cite[Theorem~11.4.1]{LOTS} yields
\begin{eqnarray*}
\norm A-\rme^B\norm &=&\norm \rme^L -\rme^B\norm \\
&\leq& \norm E\norm +\norm E\norm^2/2! +\ldots\\
&=& \rme^{\norm E \norm} -1.
\end{eqnarray*}
The other part of the estimate uses $\norm A\norm =1$ and
$\norm\rme^B\norm =1$.
\end{proof}

\section{Some Numerical Examples}

\begin{example}\end{example}
The Markov matrix
\[
A=\left[ \begin{array}{ccc}
0.30&0.45&0.25\\
0.14&0.84&0.02\\
0.14&0.52&0.34
\end{array}\right].
\]
has eigenvalues $1,0.32, 0.16$, exactly. The matrix $L=\log(A)$ is
given to four decimal places by
\[
L=\left[ \begin{array}{ccc}
-1.5272&0.5991&0.9281\\
0.3054&-0.2371&-0.0683\\
0.3054&0.9023&-1.2078
\end{array}\right],
\]
and has a negative off-diagonal entry. The closest Markov generator
$B$ to $L$ as described above is
\[
B=\left[ \begin{array}{ccc}
-1.5272&0.5991&0.9281\\
0.3054&-0.3054&0\\
0.3054&0.9023&-1.2078
\end{array}\right].
\]
and the embeddable Markov matrix $\widetilde{A}=\rme^B$ (where $B$
is entered to full precision) is given by
\[
\widetilde{A}=\left[ \begin{array}{ccc}
0.3000&0.4383&0.2617\\
0.1400&0.8046&0.0554\\
0.1400&0.5057&0.3543
\end{array}\right].
\]
One observes that all the entries of $A-\widetilde{A}$ are less than
$0.036$ in absolute value.\block

The following exactly soluble example illustrates the use of some of
our theorems.

\begin{theorem}
Let
\[
L_s=\left[ \begin{array}{ccc}
-1-s&1&s\\
s&-1-s&1\\
1&s&-1-s
\end{array}\right]
\]
where $s\in\R$, and let $A_s=\rme^{L_s}$. Then
\begin{enumerate}
\item If $s\geq 0$ then $A_s$ is an embeddable Markov matrix.

\item If $s< \sig\sim -0.5712$ then $A_s$ has at least one negative entry.

\item If $\sig\leq s<0$ then $A_s$ is Markov but not embeddable.
\end{enumerate}
\end{theorem}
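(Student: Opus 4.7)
The central observation is that $L_s$ is a real circulant matrix (its rows are cyclic shifts of $(-1-s,1,s)$), hence $A_s=\rme^{L_s}$ is also circulant and both matrices are diagonalised simultaneously by the discrete Fourier basis for $\C^3$. Computing in this basis gives the eigenvalues of $L_s$ as $0$ and $\mu_\pm=-\frac{3}{2}(1+s)\pm i\frac{\sqrt 3}{2}(1-s)$; consequently $A_s$ has eigenvalues $1$ and $\rho\rme^{\pm i\phi}$ with $\rho=\rme^{-3(1+s)/2}$ and $\phi=\frac{\sqrt 3}{2}(1-s)$. The inverse Fourier transform then expresses the first row of $A_s$ as the triple
\[
b_k(s)=\frac{1}{3}\bigl(1+2\rho\cos(\phi+2\pi k/3)\bigr),\qquad k=0,1,2,
\]
and every other row of $A_s$ is a cyclic permutation of this triple. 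Part 1 is then immediate: for $s\ge 0$ the off-diagonal entries of $L_s$ are non-negative and its row sums vanish, so $L_s$ is a Markov generator and $A_s$ is embeddable by definition.

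For part 3 I would proceed in two steps. First, take $\sigma$ to be the largest $s\le 0$ at which some $b_k$ vanishes (numerically $\sigma\approx-0.5712$); the explicit formula then shows that all entries of $A_s$ are non-negative throughout $[\sigma,0)$, so $A_s$ is Markov. Second, to rule out embeddability I would invoke Corollary~\ref{lemmalog}(3) with $n=3$. Its hypotheses are easy to check on $[\sigma,0)$: the three eigenvalues $1,\rho\rme^{\pm i\phi}$ are distinct because $\phi\in(\frac{\sqrt 3}{2},\frac{\sqrt 3}{2}(1-\sigma)]\subset(0,\pi)$ forces $\sin\phi>0$; their moduli all exceed $\rme^{-3/2}>\rme^{-\pi\sqrt 3}=\rme^{-\pi\tan(\pi/3)}$; and $\Spec(A_s)\cap(-\infty,0]=\emptyset$ by the same angular bound. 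The corollary forces any Markov generator $B$ of $A_s$ to be its principal logarithm, and since $|\Im\mu_\pm|<\pi$ here this principal logarithm is precisely $L_s$. But $L_s$ has the negative off-diagonal entries equal to $s<0$, so it is not a generator and $A_s$ cannot be embeddable.

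For part 2 I would split at $s=-1$. When $s<-1$ we have $\rho>1$, and since $\sum_k\cos(\phi+2\pi k/3)=0$ at least one of these three cosines is $\le-\frac12$, which is strictly less than $-1/(2\rho)$; hence some $b_k(s)$ is negative. When $s\in(-1,\sigma)$ I would track the specific index $k_0$ for which $b_{k_0}(\sigma)=0$ and establish by a direct sign analysis that $b_{k_0}(s)$ remains strictly negative on this short interval. The main technical obstacle is this last step: balancing the oscillatory term $\cos(\phi+2\pi k_0/3)$ against the exponentially varying threshold $-1/(2\rho)$ is delicate, and I expect it will require a careful derivative calculation showing that the growing factor $\rme^{3(1+s)/2}$ dominates the sine contribution across the interval, or else an explicit numerical bound on $\sin(\phi+2\pi k_0/3)$ there, rather than a one-line estimate.
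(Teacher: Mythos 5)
Your parts 1 and 3 are sound. Part 1 matches the paper. For part 3 you route the non-embeddability through Corollary~\ref{lemmalog}(3) (checking distinct eigenvalues, the modulus bound $\rho=\rme^{-3(1+s)/2}>\rme^{-\pi\tan(\pi/3)}$, and that the principal logarithm of $A_s$ is $L_s$ itself, which fails to be a generator); the paper instead argues directly that every eigenvalue of a putative generator $B_s$ would have to agree with an eigenvalue of $L_s$ modulo $2\pi i\Z$ and hence satisfy $|\arg(\lam)|<5\pi/6$, contradicting the sector bound (\ref{argbound}). Both arguments rest on the same Dmitriev--Dynkin/Karpelevi\v{c} ingredient, so yours is a legitimate repackaging.

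The genuine gap is in part 2, and you flag it yourself: your plan only covers $s<-1$ (where $\rho>1$ and one of the three cosines is $\leq -1/2$), and on the remaining interval between $-1$ and $\sig$ you defer to an unspecified ``careful derivative calculation or explicit numerical bound'' showing $b_{k_0}(s)<0$. As written this step is not done, and without it you have neither negativity for all $s<\sig$ nor, in fact, a guarantee that your $\sig$ (largest zero of some entry) coincides with the threshold in the statement: a priori the entry that vanishes at $\sig$ could have a tangential zero and stay nonnegative just below it. The paper avoids this entirely with a monotonicity trick you are missing: writing $L_s=-(1+s)I+F+sB$ with $F,B$ commuting permutation matrices, one gets for $s\geq t$
\[
\rme^{L_s}=\rme^{-(s-t)}\,\rme^{L_t}\,\rme^{(s-t)B}\geq 0 \quad\mbox{whenever }\rme^{L_t}\geq 0,
\]
so the set $\cS=\{s:\rme^{L_s}\geq 0\}$ is a closed half-line $[\sig,\infty)$. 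Then \emph{every} $s<\sig$ automatically gives a negative entry, with no balancing of the oscillatory term against $\rho$ needed; numerics (plus the determinant bound $\det(A_s)=\rme^{-3(1+s)}>1$ for $s<-1$) are only used to locate $\sig\sim -0.5712$. Incorporating this observation would close your part 2 in one line and also reconcile your definition of $\sig$ with the one implicitly used in the theorem.
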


\begin{proof}
We first note that $L_s1=0$ for every $s\in\R$, so $A_s1=1$.

Item~1 follows from the fact that $L_s$ satisfies all the conditions
for the generator of a Markov semigroup.

To prove item~2 we note that $L_s=-(1+s)I+F+sB$ where $F,\, B$ are
permutation matrices that commute. Let $\cS$ be the set of all $s$
such that $\rme^{L_s}$ is non-negative. If $t\in \cS$ and $s\geq t$
then
\[
L_s=L_t+ (s-t)B-(s-t)I
\]
where all the matrices commute, and
\[
\rme^{L_s}=\rme^{-(s-t)}\rme^{L_t}\rme^{B(s-t)}\geq 0,
\]
so $s\in\cS$. Therefore $\cS$ is an interval, which is obviously
closed. Numerical calculations show that the smallest number
$\sig\in\cS$ is approximately $-0.5712$. More rigorously if $s<-1$
then
\[
\det(A_s)=\rme^{\tr(L_s)}=\rme^{-3(1+s)}>1
\]
so $A_s$ cannot be a Markov matrix and must have a negative entry.
This establishes that $\sig\geq -1$.

Clearly $L_s$ is not a Markov generator if $\sig\leq s<0$. We prove
item~3 by obtaining a contradiction from the existence of a Markov
generator $B_s$ with $\rme^{B_s}=A_s$. Since
\[
\exp(\Spec(L_s))=\Spec(A_s) = \exp(\Spec(B_s))
\]
we conclude that every eigenvalue of $L_s$ differs from an
eigenvalue of $B_s$ by an integral multiple of $2\pi i$. A direct
computation shows that
\[
\Spec(L_s)=\left\{0,-\frac{3(1+s)}{2} \pm \frac{\sqrt{3}
(1-s)i}{2}\right \}.
\]
For $s$ in the stated range, each non-zero eigenvalue $\lam$ of
$L_s$ satisfies $|\arg(\lam)|<5\pi/6$ and the same applies if one
adds an integral multiple of $2\pi i$ to the eigenvalue. Hence each
non-zero eigenvalue $\lam$ of $B_s$ satisfies $|\arg(\lam)|<5\pi/6$
and (\ref{argbound}) implies that $B_s$ cannot be a generator.
\end{proof}

\begin{example} \end{example}
The following illustrates the difficulties in dealing with Markov
matrices that have negative eigenvalues. If $c=2\pi/\sqrt{3}$ and
\[
B=c\left[ \begin{array}{ccc}
-1&1&0\\
0&-1&1\\
1&0&-1
\end{array}\right],
\]
then the eigenvalues of $B$ are $0,\, -\sqrt{3}\pi\pm \pi i$. The
matrix $A=\rme^B$ is self-adjoint with eigenvalues
$1,-\rme^{-\sqrt{3}\pi},-\rme^{-\sqrt{3}\pi}$. If one uses Matlab's
`logm' command to compute $\log(A)$, one obtains a matrix with
complex entries that is not close to $B$; it might be considered
that `logm' should produce a real logarithm of a real matrix if one
exists, but it is not easy to see how to achieve this.

\begin{example}\label{twogen} \end{example}

We continue with the above example, but with the more typical choice
$c=4$. The eigenvalues of $B$ are now $0,\, -6\pm 3.4641i$. Clearly
$A=\rme^B$ is an embeddable Markov matrix. If one rounds to four
digits one obtains
\[
A\sim\left[ \begin{array}{ccc}
 0.3318  &  0.3337  &   0.3346\\
    0.3346   &  0.3318  &   0.3337\\
    0.3337   &  0.3346   &  0.3318
\end{array}\right],
\]
The use of `logm' yields
\[
\log(A)\sim\left[ \begin{array}{ccc}
 -4  &  0.3724  &  3.6276\\
    3.6276  & -4  &  0.3724\\
    0.3724 &   3.6276 &  -4
\end{array}\right],
\]
which is also a Markov generator. We conclude that $A$ is an
embeddable Markov matrix in (at least) two distinct ways.

This is not an instance of a general phenomenon. If one defines the
$5\times 5$ cyclic matrix $B$ by
\[
B_{r,s}=\left\{ \begin{array}{cc} -4&\mbox{ if } r=s,\\
4&\mbox{ if } r+1=s,\\
4&\mbox{ if } r=5,\, s=1,\\
0&\mbox{ otherwise, }
\end{array}\right.
\]
then $B$ is a Markov generator with eigenvalues $0,\, -7.2361\pm
2.3511i,\, -2.7639\pm3.8042i$. However $L=\log(\exp(B))$, with the
principal choice of the matrix logarithm, is a cyclic matrix with
some negative off-diagonal entries, so it cannot be a Markov
generator. \block

{\bf Acknowledgements} We would like to thank N J Higham and L Lin for
many valuable comments and references to the literature.

Department of Mathematics\\
King's College\\
Strand\\
London\\
WC2R 2LS\\
UK

E.Brian.Davies@kcl.ac.uk


\begin{thebibliography}{99}

\bibitem{chung}
K L Chung, Markov chains with stationary transition probabilities,
Springer, Berlin, 1960.

\bibitem{cuth1}
J R Cuthbert, On uniqueness of the logarithm for Markov semigroups,
J. London Math. Soc. 4 (1972) 623-630.

\bibitem{cuth}
J R Cuthbert, The logarithm function for finite-state Markov
semi-groups, J. London Math. Soc. (2), 6 (1973) 524-532.

\bibitem{EBDtriv}
E B Davies, Triviality of the peripheral point spectrum, J. Evol.
Eqns. 5 (2005) 407-415.

\bibitem{LOTS}
E B Davies, Linear Operators and Their Spectra, Cambridge Univ.
Press, 2007.

\bibitem{DS}
N Dunford and J T Schwartz, Linear Operators, Part 1, Interscience
Publ., New York, 1966.

\bibitem{elfving}
G Elfving, Zur Theorie der Markoffschen Ketten, Acta Soc. Sci.
Fennicae, n. Ser. A2 no. 8, (1937) 1-17.

\bibitem{gant}
F R Gantmacher, The Theory of Matrices, vol.~1, Chelsea, New York,
1959.

\bibitem{highambook}
N J Higham, Functions of Matrices: Theory and Computation, SIAM,
Philadelphia, PA, USA, 2008.

\bibitem{higham}
N J Higham and L Lin, On $p$th roots of stochastic matrices. MIMS
preprint, 2009.

\bibitem{israel}
R B Israel, J S Rosenthal, J Z Wei, Finding generators for Markov
chains via empirical transition matrices, with applications to
credit ratings, Math. Finance 11, no. 2 (2001) 245-265.

\bibitem{johan}
S Johansen, A central limit theorem for finite semi-groups and its
application to the imbedding problem for finite-state Markov chains,
Zeit. Wahrsch. 26 (1973) 171-90.

\bibitem{karp}
F I Karpelevi\v{c}, On the characteristic roots of matrices with
nonnegative elements, Izv. Akad. Nauk SSSR Ser. Mat. 15 (1951)
361-383 (in Russian), Amer. Math. Soc. Transl. (2) 140 (1988)
79-100.

\bibitem{kingman}
J F C Kingman, The imbedding problem for finite Markov chains, Zeit.
Wahrsch. 1 (1962) 14-24.

\bibitem{KS}
A Kreinin and M Sidelnikova, Regularization algorithms for
transition matrices, Algo Research Quarterly, 4 (2001) 23-40.

\bibitem{minc}
H Minc, Nonnegative Matrices, Wiley, New York, 1988.

\bibitem{nagel}
R Nagel, One-Parameter Semigroups for Positive Operators, Lect.
Notes Math. 1184, Springer, New York, 1986.

\bibitem{runn}
J Th Runnenberg 1962. On Elfving's problem of imbedding a
time-discrete Markov chain in a continuous time one for finitely
many states, Proc. Koninklijke Nederlandse Akademie van
Wetenschappen, Ser. A, Math. Sci. 65 (5) (1962) 536-41.

\bibitem{singer}
B Singer, S Spilerman, The representation of social processes by
Markov models, Amer. J. Sociology, 82 (1976) 1-54.

\bibitem{speak}
J M O Speakman, Two Markov chains with a common skeleton, Zeit.
Wahrsch. 7 (1967) 224.

\bibitem{zahl}
S Zahl, A Markov process model for follow-up studies, Human Biology,
27 (1955) 90-120.

\end{thebibliography}
\end{document}